\definecolor{Gray}{gray}{0.92}
\definecolor{LightCyan}{rgb}{0.88,1,1}
\newcolumntype{a}{>{\columncolor{Gray}}c}
\newcolumntype{b}{>{\columncolor{white}}c}
\numberwithin{equation}{section}
\def\P{{\mathbb P}}
\def\N{{\mathbb N}}
\def\E{{\mathbb E}}
\theoremstyle{plain}
\newtheorem{theorem}{Theorem}
\newtheorem{proposition}{Proposition}
\newtheorem{lem}{Lemma}
\newtheorem{definition}{Definition}
\newtheorem{example}{Example}
\theoremstyle{definition}
\title{Non-Markovian random walks with memory lapses\thanks{Updated version of: On dependent Bernoulli sequences with memory lapses}}
\author{Manuel Gonz\'alez-Navarrete\thanks{Departamento de Estad\'istica, Universidad del B\'io-B\'io, Chile. e-mail: manuelg@ime.usp.br} ,
Rodrigo Lambert \thanks{Faculdade de Matem\'atica, Universidade Federal de Uberl\^andia and Departamento de Matem\'atica, Universidade Federal da Bahia, Brazil. e-mail: lambert@famat.ufu.br}
\date{}
}
\begin{document}
\maketitle

\begin{abstract}
We propose an approach to construct Bernoulli trials $\{X_i, i\ge 1\}$ combining dependence and independence periods, and call it Bernoulli sequence with random dependence (BSRD). The structure of dependence, on the past $S_i = X_1 + \ldots + X_i$,  {defines} a class of non-Markovian random walks of recent interest in the literature. In this paper, the dependence is activated by an auxiliary collection of Bernoulli trials $\{Y_i, i\ge 1\}$, called {\it memory switch sequence}. We introduce the concept of {\it memory lapses property}, which  {is} characterized by intervals of consecutive independent steps in BSRD. The main results include classical limit theorems for a class of linear BSRD. In particular, we obtain a central limit theorem for a class of BSRD which generalizes some previous results in literature. Along the paper, several examples of potential applications are provided.
\end{abstract}

\noindent{\it Keywords}: Bernoulli sequence, correlated random walks, memory lapses, law of large numbers, central limit theorem

\section{Introduction and motivation}

Independent and identically distributed Bernoulli trials $X_1, \ldots, X_n$ and their related random walk $S_n = X_1 + \ldots + X_n$ are among the most studied subjects in statistics and probability theories. %Many of the properties and theorems about random variable  are treated extensively in several books.
%In this case, it is well known that the random variable $S_n = X_1 + \ldots + X_n$ has a binomial distribution with parameters $n$ and $p$ ($S_n \sim Bin(n,p)$). Many of their properties and theorems are treated extensively in several books of probability theory. %Even more the very first forms of the law of large numbers and central limit theorem were obtained for such $S_n$, by J. Bernoulli and A. de Moivre, respectively, in the XVIII century.
The generalizations of such processes have also been widely investigated in the literature, either by removing the identically distributed hypothesis, or by considering some dependence structure in the sequence.  For more detalis we refer the reader to Feller \cite{Fe}. %In the one hand, if the trials are still independent, but not identically distributed, with the random variables $X_i \sim Bern (p_i)$ for each $1 \le i \le n$, we have the so called Poisson trials, where the distribution of $S_n$ is known as Poisson binomial (see Feller \cite{Fe}).

In this paper we define a sequence $\{X_i,i\geq 1\}$ of Bernoulli random variables, in which each trial has probability of a success either as function of the number of previous successes $S_n$ or independent of that. The dependence will be activated/inactivated by a latent collection of independent Bernoulli trials $\{Y_i, i\ge 1\}$, called the {\it memory switch sequence}.
In other words, we construct Bernoulli sequences which have flexibility to combine dependent and independent periods. The dependence will be in ``on'' (resp. ``off'') mode, whenever the switch factor ``$Y_i=1$'' (resp. ``$Y_i=0$''). We call the model as Bernoulli sequences with random dependence (BSRD).

The dependence structure is taken from classical applications in epidemiological studies (see, for instance, Chapter 7 of Zelterman \cite{Ze}). However, as noted in recent years, a class of non-Markovian random walks \cite{BB,HK,Ku,ST} can also be defined by that kind of dependence structure. Formally, let $\mathcal{F}_i = \sigma(X_1, \ldots, X_i)$ be the $\sigma$-field generated by the sequence $X_1, \ldots, X_i$. The probability of a success on the $(i+1)$-th trial, given its past $\mathcal{F}_i$ satisfies $\P(X_{i+1} = 1 | \mathcal{F}_i) = \P(X_{i+1} = 1 | S_i)$. In words: the whole information of the past is summarized in  $S_i$. In particular, for these processes, the present paper adopts the following notation
\begin{equation}
\label{Yu2}
P^*_i(s) = \P(X_{i+1} = 1 | S_i = s),
\end{equation}
for $i \ge 1$ and $0\le s \le i$. The BSRD proposes another way to define the conditional probabilities of $X_{i+1}$ given $S_i$.

We characterize the BSRD and the property of {\it memory lapses} (see Definition \ref{defi}), which is given by a string of $0$'s in $\{Y_i,i\geq 1\}$ that represents a period of independence in the steps of the BSRD $\{X_i,i\geq 1\}$. The main results are classical limit theorems for $S_n$ in a class of BSRD with linear dependence. More specifically, we show conditions on the parameters to obtain strong law of large numbers, central limit theorem and an invariance principle for $S_n$. We also generalize asymptotic results for some models in literature, providing explicit limit quantities related to the asymptotic distributions.

The paper is organized as follows. In Section \ref{sec:model} we define the BSRD and explain its relation with a family of non-Markovian random walks. Section \ref{sec:thm} includes the main results. Section \ref{sec:app} provides examples and discusses applications and mathematical properties of the memory lapses. Finally, the tools and proofs of main theorems and other results are given in Section \ref{sec:proof}.

\bigskip

\section{Bernoulli sequences with random dependence}
\label{sec:model}

Let $\{X_i,  i \geq 1\}$ be a sequence of Bernoulli trials and $\{Y_i, i \geq 1\}$ an auxiliary collection of independent Bernoulli random variables, with $\P(Y_i=1) =\lambda_i$ and independent of $S_i= X_1 + \ldots + X_i$, for all $ i \ge 1$. The dependence structure of $\{X_i\}$ (for short notation) will be associated to the sequence $\{Y_i\}$, which we call {\it memory switch sequence}. Formally, the probability of having a success at $(i+1)$-th trial is related to its past information $S_i$ and the realization of the random variable $Y_i$, by $\P(X_{i+1} = 1 | \mathcal{F}^X_i, \mathcal{F}^Y_i) = \P(X_{i+1} = 1 | S_i, Y_i)$, where $\mathcal{F}^X_i = \sigma(X_1, \ldots, X_i)$ is the $\sigma$-field generated by the sequence $\{X_i\}$, and similar for $\mathcal{F}^Y_i$. Let denote
\begin{equation}
\label{ourProb}
P_i(s,y) = \P(X_{i+1} = 1 | S_i = s, Y_i = y),
\end{equation}
for all $i\ge 1$, $0\le s \le i$ and $y=0,1$. We recall that $P$ is defined in a different probability space than $P^*$ in \eqref{Yu2}. We remark that, for $y=0$ the probability $P_i(s,0)$ will not depend on the previous successes $S_i$. However, if $y=1$, the dependence exists. We think random variable $Y_i$ as a latent factor that determines the choice of dependence (or independence) on the past $S_i$ for the trial $X_{i+1}$. Now, it is provided the definition for the so-called Bernoulli sequence with random dependence (BSRD).

\begin{definition}
\label{def1}
A BSRD is the collection $\{X_i\}$, with memory switch sequence $\{Y_i\}$, defined by conditional probabilities \eqref{ourProb} satisfying
\begin{equation}
\label{condi}
P_i(s,0) = P^*_i(0) \ \ \ \text{and}  \ \ \ P_i(s,1) = P_i^*(s),
\end{equation}
where $P^*_i(\cdot)$, given by \eqref{Yu2}, is the probability of an embedded dependent Bernoulli sequence.
\end{definition}

We propose the formulation of some correlated random walks and related processes, as BSRD in Definition \ref{def1}. Based in the applications exposed in \cite{HK}, the idea is to include a factor $\{Y_i\}$, maybe genetic, linguistic or economical that will provide some interference in the dependence structure. In this sense, the Bernoulli random variables mean steps up or down, if $X_i=1$ or $X_i=0$, respectively.

A first model to be analysed was studied by Hod and Keshet \cite{HK}, and its representation by conditional probabilities \eqref{Yu2} is given by 
$$
P^*_i(s) = \dfrac{1}{2}\left( 1 - \mu\dfrac{i}{i+l}\right) + \mu \dfrac{s}{i+l} \ ,$$
where $-1 < \mu < 1$ and $l >0$ is a \emph{constant transient time}, that plays the following correlation rule: For $i<<l$, the \emph{past effect} is not too intense, while for $i>>l$ it does. In \cite{HK} the authors also discuss several applications.

The second case was called elephant random walk (ERW) and has been introduced in \cite{ST}. In the ERW it is supposed that the elephant remembers its full history and chooses its next step as follows. First, it selects randomly a step from the past, and then, with probability $p \in [0,1]$, it repeats what it did at the remembered time, whereas with the complementary probability $1 - p$, it makes a step in the opposite direction. In this respect, the probability of $i$-step of ERW in a BSRD version is given by
\begin{equation}
\label{ERW}
P_i(s,y) = (1-p) + (2p-1) y \dfrac{s}{i},
\end{equation}
where $p \in [0,1]$ and $\{Y_i\}$ are i.i.d. with parameter $\lambda \in [0.1]$. For the BSRD in \eqref{ERW}, a memory lapse will be an interval of consecutive steps for which the elephant does not look to the history of steps. That is, it makes the step independently of its past.

As noted in \cite{BB}, the original ERW (that is $\lambda=1$ in \eqref{ERW}) can be represented by a generalized Polya-type urn. In fact, classical asymptotic results can be obtained by using different mathematical approaches (for details, see \cite{Ber,Ku}).

In a different framework, a generalized binomial distribution was proposed by Drezner and Farnum \cite{DF}. It is given by
\begin{equation}
\label{epi0}
P^*_i(s)= (1-\beta)p + \beta \frac{s}{i},
\end{equation}
where $0\le \beta \le 1$ and $p=\P(X_1=1)$ is the initial probability. In \cite{BHW} it was studied a model for which each $i$-th individual makes up his mind about whether to adopt decision A ($X_i=1$) or B ($X_i=0$). The model is related to Example \ref{ex3} below and can be defined by same conditional probabilities \eqref{epi0}. The formulation was given as a generalization of the Polya urn, that is, if $\beta=1$ the conditional probabilities \eqref{epi0} define a Polya urn process.

%The work \cite{BHW} did not link their formulation with the results obtained in Heyde \cite{He} for the generalized binomial distribution introduced in \cite{DF}. That is, $S_n$ converges almost surely to $p$, as $n \to \infty$, and a central limit theorem for $S_n$ with variance going to $\frac{p(1-p)}{1-2\beta}$.

In this sense, in Gonz\'alez-Navarrete and Lambert \cite{GL} it was introduced an urn process with the property of memory lapses. The relation between dependent Bernoulli sequences and P\'olya-type urn processes is discussed therein. We refer the book of Mahmoud \cite{Mah} for details about the theory of P\'olya urns, to Baur and Bertoin \cite{BB} for its connection with the ERW, and to Janson \cite{Janson} for a general theory about convergence of P\'olya urn models.

{Let us now state a simple technical result that allows us to obtain finite-dimensional laws $\P(X_1, \cdots ,X_n)$, for all $n \geq 1$ and then check the existence of the BSRD as a stochastic process. Consider a BSRD $\{X_i, 1\le i \le n\}$, as in Definition \ref{def1}. Using independence of $Y_i$ and $S_i$ combined with definition \ref{ourProb} we have}

\begin{equation}
\label{propo1}
\begin{array}{ll}
P_i(s):= \P(X_{i+1} = 1 | S_i = s) &= \displaystyle\sum_{y \in \{0,1\}}  \P(X_{i+1} = 1 | S_i = s, Y_i = y) \P(Y_i=y)\\
&= \displaystyle\sum_{y \in \{0,1\}} P_i(s,y) \P(Y_i=y)\\
& = (1-\lambda_i)P^*_i(0) + \lambda_i P^*_i(s).
\end{array}
\end{equation}

In other words, if $\lambda_i \equiv 0$ ($\lambda_i = 0$ for all $i\ge 1$), the BSRD has the same probability measure than independent Bernoulli trials, with probability of success $P^*_i(0)$. In particular, if $P^*_i(0)=1/2$, we obtain the well-known simple symmetric random walk. Instead, if $\lambda_i \equiv 1$, the BSRD correspond to the embedded dependent trials defined in \eqref{Yu2}. 

Now, let us take a look at BSRD as being stochastic processes. Particularly, we remark two important facts related to a realization of such a process (see \eqref{our2} as reference).

\begin{itemize}
\item [(R1)] If $Y_i=0$ then $X_{i+1}$ is independent of its past. In other words, $X_{i+1}$ is chosen from a Bernoulli distribution with parameter $\alpha_i = P^*_i(0)$ independent of the observation from $(X_1, \ldots, X_i)$.
\item [(R2)] If $Y_i=1$ then $X_{i+1}$ should be dependent of its past. The probability $P^*_i(s)$ is defined by the embedded dependent Bernoulli sequence.
\end{itemize}

Note that, by (R1) a period of independence in the BSRD is given by a string of $0$'s in the memory switch sequence. This fact leads us to the following definition

\begin{definition}
\label{defi}
A {\it memory lapse} in the BSRD $\{X_i\}$ is an interval $I\subset \N$ such that $Y_i=0$ for all $i\in I$ and there is no interval $J \neq I$ with $J \supset I$ such that $Y_i=0$ for all $i \in J$. The length of the lapse is $|I|$.
\end{definition}

In some sense, we think this period as a lapse because after these, the model always will recover the dependence on the whole past, given by the conditional probability $P^*_i(s)$. More details about the notion of memory lapses, their applications and some mathematical properties will be discussed in Section \ref{sec:app}.

\section{Main results}\label{sec:thm}

We study, as embedded dependent processes, the family of linear-dependent Bernoulli sequences described in Wu et al. \cite{WQY}. This leads us to an application of BSRD in a wide class of previous models in the literature. Using notation \eqref{ourProb} we provide the following definition
\begin{equation}
\label{our2}
\begin{array}{c}
P_i(s,y) = \displaystyle\alpha_i + \beta_i \frac{ s y }{i} \ \ \ \text{and} \ \ \ \P(X_1=1)=P_0(s)=\alpha_0,
\end{array}
\end{equation}
for all $i \ge 1$, $0\le s \le i$ and $y=0,1$. The sequences $\{\alpha_i\}$ and $\{\beta_i\}$ of parameters must satisfy the following conditions: $\alpha_i, \beta_i \ge0$ and $\alpha_i + \beta_i \le 1$. We remark that $\beta_i$ measures the strength of the dependence, while parameter $\lambda_i = \P(Y_i = 1)$ represents the probability to that dependence actually exists.

In the following results we show conditions to obtain classical limit theorems for $S_n$ in the BSRD defined in \eqref{our2}. %Then, let denote
Let first define some quantities appearing in the main results. We start with $a_1=1$, and for $n \geq 2$
\begin{equation}
\label{an}
a_n = \prod_{k=1}^{n-1}\left(1 + \dfrac{\beta_k}{k} \lambda_k\right) \ , \ \ \ A_n^2 =\displaystyle\sum_{i=1}^n \frac{1}{a_i^2} \ \ \text{ and } \ \  B_n^2 =\displaystyle\sum_{i=1}^n \frac{p_i(1-p_i)}{a_i^2},
\end{equation}
where $p_i=\P(X_i=1)$. Now we state the main theorems of this paper. The first one provides a strong law of large numbers (SLLN) for $S_n$.

\begin{theorem}\label{teo1}
Consider a BSRD $\{X_i\}_{1\le i\le n}$ as in \eqref{our2}.
Then,

\begin{equation}
\label{LLN}
\displaystyle\lim_{n\to \infty} \frac{S_n - \E(S_n)}{n} = 0 \ \ \ \text{a.s}
\end{equation}
if and only if,
\begin{equation}
\label{cond1}
\displaystyle\sum_{k=1}^{\infty} \frac{1-\beta_k\lambda_k}{1+k} = \infty.
\end{equation}

\end{theorem}
 
In \eqref{cond1} we recall that the relation between the real number sequences $\{\beta_i\}_{i \in \mathbb{N}}$ and $\{\lambda_i\}_{i \in \mathbb{N}}$ plays a central role in the SLLN. For instance, if $\beta_i = 1/\lambda_i$ for all $i$, we do not get the convergence \eqref{LLN}. A more accurated discussion about the relation between these parameters and its consequences is provided at \cite{GL} in the context of Generalized P\'olya-type urns.

In what follows, we present the second theorem. It provides an invariance principle for $S_n$.
  \begin{theorem}\label{teo2}
Suppose in addition to the hypothesis of Theorem \ref{teo1} that $\lim_{n \to \infty} B_n = \infty$ and \break $\limsup_{n \to \infty} A_n/B_n < \infty$. Then it is possible to redefine $\{X_i \}$ in a new probability space without changing its distribution and there exists a standard Brownian motion $\{W(t)\}$ defined on the same probability space such that
\begin{equation}\label{IP}
\mbox{(a)} \ \ \dfrac{\left|\frac{S_n - \E(S_n)}{a_n} - W(B_n^2)\right|}{B_n\sqrt{\log\log B_n}} \xrightarrow{a.s.} 0 \ \  \ \ \ ; \ \ \ \mbox{(b)} \ \ \dfrac{\left|\frac{S_n - \E(S_n)}{a_n} - W(B_n^2)\right|}{B_n} \xrightarrow{\P} 0 \ .
\end{equation}

\end{theorem}

In \eqref{IP}, the symbols $\xrightarrow{a.s.}$ and $\xrightarrow{\P}$ mean almost-sure convergence and convergence in probability, respectivelly, with all limits taken as $n$ diverges.

We recall that the central limit theorem (CLT) and also the law of the iterated logarithm (LIL) for BSRD follow straightforward from Theorem \ref{teo2} and the CLT and LIL for the standard Brownian motion. 

Finally, we generalize some asymptotic results for previous models in the literature. In particular, the generalized binomial proposed in \cite{DF}, as expressed in \eqref{epi0}. The following result provides explicit limiting proportion of successes for this particular class of BSRD.

\begin{theorem}\label{teo3}
Let $\{X_i\}$ a BSRD with $\{Y_i\}$ i.i.d. parameter $\lambda \in [0.1]$. The conditional probabilities \eqref{our2} are given by
\begin{equation}
\label{epi}
P_i(s,y)=(1-\beta) \alpha_0 + \beta y\dfrac{s}{i},
\end{equation}
where $0\le \beta \le 1$ and $\alpha_0=\P(X_1=1)$

\begin{itemize}
\item[(i)] If $\beta \lambda < 1/2$ then
\begin{equation}\label{tcl1}
\dfrac{1}{\sqrt{n}}\left({S_n - \frac{n\alpha_0 (1 - \beta)}{1-\beta \lambda}}\right) \xrightarrow{d} N(0,\sigma^2) \ \ ; \ \ \mbox{as} \ n \to \infty \ ,
\end{equation}
where 
\begin{equation}\label{var1}
\sigma^2 = \dfrac{\alpha_0(1-\alpha_0-\beta(\lambda-\alpha_0))(1-\beta)}{(1-2\beta \lambda)(1-\beta \lambda)^2}
\end{equation}
\item[(ii)] If $\beta \lambda = 1/2$ then
\begin{equation}\label{tcl2}
\dfrac{S_n - {2n\alpha_0 (1 - \beta)}}{\sqrt{n\log n}} \xrightarrow{d} N(0,\sigma^2) \ \ ; \ \ \mbox{as} \ n \to \infty \ ,
\end{equation}
where 
\begin{equation}\label{var2}
\sigma^2 = 4\alpha_0\left(\frac{1}{2}-\alpha_0(1-\beta)\right) (1 - \beta ) \ .
\end{equation}
\end{itemize}

\end{theorem}

 The proofs of all theorems will be given in Section \ref{sec:proofTh} and are based on classical results for convergence of bounded martingale differences (Theorems \ref{teo1} and \ref{teo2}), and its relation with a class of generalized P\'olya urns (Theorem \ref{teo3}).

\bigskip

\section{The memory lapses property}
\label{sec:app}

This section presents various examples and discusses potential application for the BSRD. The notion of memory lapses is exploited in the context of each example/application. The goal is to discuss the flexibility provided by this property in further mathematical modelling of real problems. 

Consider a contagious disease affecting a finite population. Each of the individuals can be described by Bernoulli distributed indicators of their disease status. In other words, $X_i =1$ if the $i$-th member is a diseased case and $X_i$ is zero if this member is otherwise healthy. The collection of Bernoulli trials $\{X_i, i\ge 1\}$ has been extensively studied in literature by conditional probabilities \eqref{Yu2}. {We refer} for instance \cite{Ze}. 

In the approach of BSRD, we suppose that a genetic or environmental factor can be represented by the memory switch sequence $\{Y_i\}$. That is, such factor activates/inactivates the dependence structure of the model. Then, for instance, consider the model in \eqref{epi} and the following two situations.

\begin{example}\label{ex1}
Let denote switch factor $Y_i= \mathbb{I}_{\{Z_i < z\}}$, where $Z_i$ is a continuous random variable which represents the concentration of an ``immunizing antibody'' related to $(i+1)$-th individual, and $z$ is the critical value for immunization. Then, if $Z_i \ge z$, the probability of $(i+1)$-th member become diseased is independent of the historical of such disease in the population. Otherwise, the probability will increase by the (familiar) historical of successes $S_i$, the number of sick individuals until  {time} $i$.
\end{example}

\begin{example}\label{ex2}
In other situations, $Y_i$ may denote  {a vaccine} to prevent infection. Where $Y_i=0$ means that the vaccine in effective when applied on the $(i+1)$-th member of the population. Otherwise, the vaccine does not actuate and the probability increases by the historical of  {sick} individuals.
\end{example}

Therefore, as in Definition \ref{defi}, a memory lapse of length $l$ could be interpreted as a group of $l$ consecutively patients for which the vaccine is effective. That is, the historical of sickness is neglected. 

In what follows we give other examples trying to understand better the memory lapses property in a bit different situations.

\begin{example}\label{ex3}
Customers can buy a product $(X_i=1)$ by necessity or by another reason. Thinking on it, an advertising program $\{Y_i\}$ is launched, that can or cannot reach the customers. Consider the conditional probabilities \eqref{our2}. Let be $\alpha_i$ the quantity related to necessity and $\beta_i$ the quantity related to the ``social pressure" (history of selling) of the product as defined in \cite{BHW}, associated to $(i+1)$-th customer. 
Then, if $Y_i=1$, that is, the publicity reached him/her, the ``social pressure" will raise the probability of the customer buying the product (see remark (R2) in Section \ref{sec:model}).
\end{example} 

\begin{example}\label{ex4}
Imagine that there is a robot (computer) used to buy/sell some  {asset} in the stock market. It obeys an algorithm that determines such decision. At the $(i+1)$-th decision, the robot tries to access the historical buying/selling activity. If it has success ($Y_i=1$), then its decision will be based on the history of the asset. If it doesn't, due by some noise or stoppage, for instance, then it decides to buy (or sell) the asset only by flipping a coin.
Here we are supposing that although its memory is inaccessible ($Y_i=0$), it has to take a decision anyway. 
\end{example} 
%
% \1{
\begin{example}\label{hot} (The hot hand in basketball) In basketball, there is a common belief that the probability of hitting a shot after a hit is greater than the probability of hittting after a miss. Moreover, a sequence of consecutive hits will increase the probability of a hit in the next shot. This is the so-caled "hot hand phenomena" (we refer the reader to the paper from Gilovich et al \cite{GVT}). In the context of BSRD, the hot hand can be regarded as a consecutive string of 1's in $\{X_i\}$ and $\{Y_i\}$ (or a string without misses and without memory lapses). Then the probability of hitting will raise since the basketball player comes from a success history.
\end{example}

Therefore, in Example \ref{ex3} a memory lapse is denoted by consecutive costumers for which the publicity does not reach them. In a similar sense, the memory lapses can be included in the model on social behaviour about soft technologies introduced by Bendor et al. \cite{BHW}. In the case of Example \ref{ex4} the memory lapses are periods of decisions taken without looking to the historical of selling of the  {asset}, described by interruptions in the system or the intranet connection. In view of Example \ref{hot} the memory lapse can be though as a period in which the opposite team moves the defence to the player, which is a common strategy when the objective is to stop some player in hot hand.

Finally we mention the work of Businger \cite{SB}, which recently used similar ideas to construct the so-called shark random swim.

\subsection{Characterization of switch sequence}

In a complementary line, we obtain some mathematical results about the occurrence of memory lapses in a BSRD. In particular, we analyse the property of memory lapses by studying random variables defined on $\{Y_i\}$.

First of all, note that in previous sections we supposed the trials $\{Y_i\}$ being independent, with $\P(Y_i = 1) = \lambda_i$. We highlight here that the problem to study patterns in an independent Bernoulli sequence (Poisson trials) has been widely  {studied} in literature \cite{Fe}. Therefore there are several possibilities to analyze such patterns by assuming different forms for the parameters collection $\{\lambda_i\}$. The assumptions about that parameters are given by specific application of the BSRD. However, as an illustration, we focus in two particular cases and compare their behaviours. The results are obtained by using techniques from recent works in the study of pattern strings in Bernoulli sequences (see, for instance \cite{Ho2,Ho,HoK,SS2}).

In a first part we study waiting times for switch sequence $\{Y_i\}$ based on the approach introduced in \cite{EbSo}. In this sense, we focus in the so called \emph{frequency (FQ) and succession (SQ) quotas} for Bernoulli trials. Revisiting the situations given above, we think applications to the analysis of these FQ and SQ problems.

Formally, in the case of FQ, we denote $W^{FQ}_r$ the waiting time until $r$ failures (or $0$'s) has been observed in $\{Y_i\}$. We assume the memory switch sequence being i.i.d. We recall that since we are counting how many independent Bernoulli trials until observe $r$ failures. Therefore the random variable $W^{FQ}_r$ follows a negative binomial distribution with parameters $1-\lambda$ and $r$.

\begin{example}
\label{ex:fq}
(Relation between FQ and Example \ref{ex3}) Suppose that there is a criterion to evaluate publicity. For instance, if it completes a fixed quantity $r$ of non-reached customers, then the publicity is removed. That is, we are  {interested on} the waiting time until attain a given quota of 0's. As we said above, decision of remove publicity should  {regard} the probability distribution of a negative binomial random variable
\end{example}

Note also, if we want to see the proportion of failures, then we will deal with a binomial distribution. To illustrate this,  {suppose that} we choose a ``critical proportion'' of failures $\delta \in (0,1)$. Then the probability of reach this proportion will be given by $P(Z_n \leq \lfloor n \delta \rfloor)$, where $Z_n \sim Bin(n,1-\lambda)$. Here we recall that by $\lfloor x \rfloor$ we mean the greatest integer less or equal than $x$.

On the other hand, we consider a succession quota (SQ) problem.  Let $W^{SQ}_s$ be the waiting time until  {the first memory lapse of size $s$ is observed}. It is possible to obtain a probability generating function for $W^{SQ}_s$, given by
\begin{equation}
\label{SQ}
\phi_W(t) = \dfrac{(1- qt)(q t)^s}{(1-\lambda t) (1-qt) - \lambda qt^2(1-(q t)^{s-1})},
\end{equation}
where $q=1-\lambda$ (see \cite{EbSo}). In particular, we obtain $\E(W^{SQ}_s) = (1-q^s) / (\lambda q^s)$.

However, as other situations from SQ, in next example we do not look at the waiting time of a memory lapse. Instead, we analyse a pattern of $1$'s, that gives us a particular information about the situation under study. Of course, if one interchanges $\lambda$ by $q$ in \eqref{SQ} we obtain the corresponding probability generating function.

\begin{example}
\label{ex:sq}
(Relation between SQ and Example \ref{ex2}) We define the waiting time given by the first instant that we obtain $s$ consecutive 1's in the memory switch sequence. In other words, we are  {interested} in the first time for which the vaccine {has not effects in} $s$ consecutive patients.
\end{example}

Let us now address a second kind of questions. We modify the approach to study pattern behaviours of the {\it memory lapses}. Then, define the random variables $M_l(n)$, $l \in \{1,2,\ldots, n\}$, representing the number of memory lapses of length $l$ in the first $n$ trials of $\{Y_i\}$. Formally,
\begin{equation}
\label{Mnr}
\begin{array}{ll}
M_l(n) = \displaystyle\sum_{k=1}^{n-l-1} & Y_k (1-Y_{k+1}) \cdots (1-Y_{k+l}) Y_{k+l+1}\\
& + \ (1-Y_1) \cdots (1-Y_{l}) Y_{l+1} \ + \ Y_{n-l}(1-Y_{n-l+1}) \cdots (1-Y_n),
\end{array}
\end{equation}
for $n>l$, and where second and third terms represent a memory lapse at the beginning and at the end of the sequence, respectively. In other words, $M_l(n)$ is the number of runs of 0's (see \cite{Ho2,HoK}) of length $l$ in the first $n$ trials of $\{Y_i\}$.

If we are able to obtain information about random variables given in \eqref{Mnr} we could say something about, for instance, the groups of consecutive customers for which the publicity does not reach, in Example \ref{ex3}. Also with $M_l(n)$ we count the strings of $l$ consecutive decisions without looking for the data in Example \ref{ex4}.

In addition to \eqref{Mnr}, we will be interested in periods of alternate dependence of BSRD. Formally, as functions of sequence $\{Y_i\}$, we have
\begin{equation}
\label{Anr}
A_l(n) = \displaystyle\sum_{k=1}^{n-2l+1} Y_k (1-Y_{k+1}) \cdots Y_{k+2l-2} (1-Y_{k+2l-1}),
\end{equation}
being the number of alternating dependence-independence periods of size $2l$ in the first $n$ trials of the BSRD $\{X_i\}$, where $n> 2l$. These random variables could give us information about periods of high interference in Example \ref{ex4}. Of course their characterization complements the informations given by \eqref{Mnr}.

In the next result we obtain expectation of random variables $M_l(n)$ and $A_l(n)$. We consider the assumptions i.i.d. and the collection $\lambda_i = a / (a+b+i-1)$ for $a>0$, $b\ge 0$ and $i\ge 1$, usually denoted by $Bern(a,b)$ (see \cite{Ho,SS2}).

\begin{proposition}
\label{propo2}
\begin{itemize}
\item [i)] If $\{Y_n\}$ are i.i.d. with parameter $\lambda$, then
\begin{equation}
\label{M1}
\E(M_l(n) )= (1-\lambda)^l [ 2 \lambda + \lambda^2(n-l-1)], \ \ l < n,
\end{equation}
and
\begin{equation}
\label{A1}
  \E(A_l(n) ) = (n-2l)( \lambda(1-\lambda))^l, \ \ 2l<n.
\end{equation}

\item [ii)] If $\{Y_n\}$ are $ {Bern}(1,b)$, then
\begin{equation}
\label{M2}
\E(M_l(n) )= \dfrac{2b+l}{(b+l)(b+l+1)}, \ \ 1 \le l < n,
\end{equation}
and
\begin{equation}
\label{A2}
  \E(A_l(n) ) = \displaystyle\sum_{k=1}^{n-2l+1} \displaystyle\prod_{i=1}^{l}\dfrac{1}{k+b+2i-1} \ \  2l < n.
\end{equation}
\end{itemize}
\end{proposition}

We remark that it is possible to obtain second moments as recursive functions of expectations in Proposition \ref{propo2}. Note that \eqref{M2} does not depend on the value of $n$, this is an interesting feature of Poisson trials $Bern(a,b)$, which have been studied in different contexts. For instance, the sequence $Bern(1,0)$ arises in the limit in the study of cycles in random permutations and record values of continuous random variables. Moreover, the sequence $Bern(a,0)$ has some applications in nonparametric Bayesian inference and species allocation models (see \cite{Ho,SS2} and references therein).

Finally, we remark that, if $\sum_{k\ge 1} \E(Y_k) = \infty$, by second Borel-Cantelli lemma we have that, $\sum_{k\ge 1} Y_k = \infty$ almost sure. This implies that, with probability one, the successes occur infinitely often. However, if $\{Y_i\}$ are $Bern(1,b)$, the number of strings $\{11\}$ is almost surely finite, since $\sum_{k\ge 1} \E(Y_k Y_{k+1}) < \infty$.

\bigskip

\section{Proofs}
\label{sec:proof}

\begin{proof}[Proof of Proposition \ref{propo2}]
Note that by independence we have from equation \eqref{Mnr}
\begin{equation}
\begin{array}{ll}
\E(M_l(n)) =&  \displaystyle\sum_{k=1}^{n-l-1}  \lambda_k (1-\lambda_{k+1}) \cdots (1-\lambda_{k+l}) \lambda_{k+l+1} \\[0.4cm]
  &  + \ (1-\lambda_1) \cdots (1-\lambda_{l}) \lambda_{l+1} + \ \lambda_{n-l}(1-\lambda_{n-l+1}) \cdots (1-\lambda_n).
 \end{array}
\end{equation} 
 
 In the part i), the case $\{Y_n\}$ are iid, by a straightforward calculation we obtain \eqref{M1}. Similarly, we prove \eqref{A1}.
 
Now, in part ii), if $\{Y_n\}$ are $Bern(1,b)$, we should calculate
\begin{equation}
\E(M_l(n)) =  \displaystyle\sum_{k=1}^{n-l-1}  \left( \dfrac{1}{k+b+l} - \dfrac{1}{k+b+l+1} \right) + \dfrac{b}{(b+l)(b+l+1)} + \dfrac{1}{n+b}.
\end{equation}
Then, solving the telescoping sum, we obtain \eqref{M2}. By similar arguments we get $\E(A_l(n))$.

\end{proof}

\subsection*{Proof of main results}
\label{sec:proofTh}

The proof of Theorem \ref{teo1} is based in general results about convergence of martingale differences (see \cite{H&H}). First, let
\begin{equation}\label{mart}
M_n = \dfrac{S_n - \E(S_n)}{a_n}
\end{equation}
where $a_n$ are given by \eqref{an}, and denote
\begin{equation}
\label{martdiff}
D_1 = M_1 \ \ \ \ \  D_n = M_n - M_{n-1}, \ \ n \ge 2.
\end{equation}

We aim to prove that $\{D_n, \mathcal{F}_n, n \ge 1\}$ is a sequence of bounded martingale differences. First, we need particular cases of Theorem 2.17 and Corollary 3.1 included in Hall and Heyde \cite{H&H}. We state the results without proof.

\begin{lem}
\label{lemA}
Let $\{Z_n,\mathcal{F}_n, n \ge 1\}$ be a sequence of martingale differences. If 
$\sum_{n=1}^{\infty} \E[Z_n^2| \mathcal{F}_{n-1}] < \infty$ a.s., then $\sum_{i=1}^{n}Z_i$ converges almost surely.
\end{lem}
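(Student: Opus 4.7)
The plan is to use the classical localization-by-stopping argument that reduces the general case to the $L^2$-bounded martingale convergence theorem, which is the standard route for this result in Hall and Heyde.

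First, I would introduce the predictable quadratic variation $V_n = \sum_{i=1}^n \E[Z_i^2 \mid \mathcal{F}_{i-1}]$, which is non-decreasing and $\mathcal{F}_{n-1}$-measurable, and by hypothesis satisfies $V_\infty < \infty$ almost surely. The key obstruction to directly invoking $L^2$ convergence for $M_n = \sum_{i=1}^n Z_i$ is that although $V_\infty$ is a.s.\ finite, it need not be integrable, so $\sup_n \E[M_n^2]$ may be infinite. The remedy is localization.

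Next, for each $k \ge 1$, I would define the stopping time $T_k = \inf\{n \ge 1 : V_{n+1} > k\}$, using predictability of $V$ to check that $\{T_k \ge n\} = \{V_n \le k\} \in \mathcal{F}_{n-1}$. I would then consider the stopped process $M_n^{(k)} = \sum_{i=1}^{n \wedge T_k} Z_i$, which is still a martingale with respect to $\{\mathcal{F}_n\}$. Using orthogonality of martingale differences together with the identity $\mathbf{1}_{\{T_k \ge i\}} \in \mathcal{F}_{i-1}$, one gets
\begin{equation*}
\E\bigl[(M_n^{(k)})^2\bigr] \;=\; \E\bigl[V_{n \wedge T_k}\bigr] \;\le\; k.
\end{equation*}
Hence $M_n^{(k)}$ is an $L^2$-bounded martingale, and Doob's $L^2$ martingale convergence theorem delivers almost sure convergence of $M_n^{(k)}$ as $n \to \infty$.

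Finally, I would glue the localizations together. On the event $\{V_\infty \le k\}$ we have $T_k = \infty$, so $M_n^{(k)} = M_n$ for all $n$, and the a.s.\ convergence of $M_n^{(k)}$ yields the a.s.\ convergence of $M_n$ on this event. Since $\{V_\infty < \infty\} = \bigcup_{k \ge 1} \{V_\infty \le k\}$ has full probability by the hypothesis, the countable union argument gives a.s.\ convergence of $\sum_{i=1}^n Z_i$. The main subtlety to be careful about is verifying predictability of $T_k$ (so that the stopped martingale argument is legitimate) and the measurability step that lets one pull $\mathbf{1}_{\{T_k \ge i\}}$ inside the conditional expectation; the rest is routine manipulation.
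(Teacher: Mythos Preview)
Your proposal is correct and is essentially the standard localization argument behind Theorem~2.17 in Hall and Heyde. Note, however, that the paper does not give its own proof of this lemma at all: it simply states the result and cites it as a particular case of Theorem~2.17 in \cite{H&H}, so there is nothing in the paper to compare your argument against beyond that reference.
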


%
%
%\item [(b)] If there exists a sequence of positive constants $\{B_n\}$ such that $B_n \to \infty$ as $n \to \infty$ and $\frac{1}{B^2_n} \sum_{j=1}^n \E[Z_j^2| \mathcal{F}_{j-1}] \xrightarrow{p} \sigma^2$. Then $\frac{\sum_{j=1}^n Z_j}{B_n} \xrightarrow{d} N(0,\sigma^2)$.
%\end{itemize}

In order to check conditions in previous lemma, we need the next auxiliary result, which is stated as follows

\begin{lem}
\label{lemC}
$\{D_n, \mathcal{F}_n, n \ge 1\}$ in \eqref{martdiff} are bounded martingale differences.
\end{lem}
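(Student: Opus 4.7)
The plan is to verify in order the three properties of a bounded martingale difference sequence: adaptedness, the vanishing conditional mean, and uniform boundedness. Adaptedness is immediate since $D_n$ is a deterministic function of $X_1, \ldots, X_n, Y_1, \ldots, Y_n$, so measurability with respect to $\mathcal{F}_n$ follows at once.

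For the conditional mean, the central computation will be to use Proposition \ref{main} applied to the linear model \eqref{our2}, which gives
\begin{equation*}
\E(X_n \mid \mathcal{F}_{n-1}) = P^*_{n-1}(S_{n-1}) = \alpha_{n-1} + \lambda_{n-1}\beta_{n-1}\,\frac{S_{n-1}}{n-1}.
\end{equation*}
Taking expectations yields the recursion $\E(S_n) = (1 + \lambda_{n-1}\beta_{n-1}/(n-1))\E(S_{n-1}) + \alpha_{n-1}$, which shows that the ``deterministic shift'' $\alpha_{n-1}$ cancels when one forms $X_n - \E(X_n\mid\mathcal{F}_{n-1})$. Combining these two facts with the defining recursion $a_n = a_{n-1}(1 + \lambda_{n-1}\beta_{n-1}/(n-1))$, a direct algebraic rearrangement gives the key identity
\begin{equation*}
a_n D_n = (S_n - \E(S_n)) - \frac{a_n}{a_{n-1}}(S_{n-1} - \E(S_{n-1})) = X_n - \E(X_n \mid \mathcal{F}_{n-1}),
\end{equation*}
from which the martingale property $\E(D_n \mid \mathcal{F}_{n-1}) = 0$ is immediate. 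For $n=1$ the claim $\E(D_1) = 0$ follows from $D_1 = X_1 - \alpha_0$ and $\E(X_1) = \alpha_0$.

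For the boundedness step, the same identity $D_n = (X_n - \E(X_n\mid\mathcal{F}_{n-1}))/a_n$ makes the bound transparent: the numerator lies in $[-1,1]$ because $X_n \in \{0,1\}$ and its conditional expectation lies in $[0,1]$, while the denominator satisfies $a_n \geq 1$ since each factor $1 + \beta_k\lambda_k/k$ is at least one (the parameters $\beta_k, \lambda_k$ being nonnegative). Hence $|D_n| \leq 1/a_n \leq 1$ for all $n \geq 1$.

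I do not expect a genuine obstacle here: the lemma is essentially a verification once one writes the compact form $a_n D_n = X_n - \E(X_n \mid \mathcal{F}_{n-1})$. The only thing that needs care is to match the recursions for $\E(S_n)$ and $a_n$ precisely, so that the cross terms cancel; everything else is a one-line bound.
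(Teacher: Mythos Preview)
Your proof is correct and follows essentially the same approach as the paper: both compute $\E(X_n\mid\mathcal{F}_{n-1})=\alpha_{n-1}+\lambda_{n-1}\beta_{n-1}S_{n-1}/(n-1)$ and $\E(X_n)$, then use the recursion $a_n=a_{n-1}(1+\beta_{n-1}\lambda_{n-1}/(n-1))$ to see that the cross terms cancel. Your packaging via the identity $a_nD_n=X_n-\E(X_n\mid\mathcal{F}_{n-1})$ is slightly tidier and yields the sharper bound $|D_n|\le 1/a_n$, whereas the paper keeps the two-term decomposition $D_n=\frac{X_n-\E(X_n)}{a_n}-\frac{\beta_{n-1}\lambda_{n-1}}{a_n}\cdot\frac{S_{n-1}-\E(S_{n-1})}{n-1}$ and obtains only $|D_n|\le 2/a_n$.
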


\begin{proof}
First, we prove that $M_n$ in \eqref{mart} is a martingale:
\begin{equation}
\begin{array}{ll}
\E[M_{n+1}|\mathcal{F}_n] & = \dfrac{1}{a_{n+1}}\E[S_n+X_{n+1}-\E(S_n)-\E(X_{n+1})|\mathcal{F}_n] \\[0.4cm]
& =  \dfrac{1}{a_{n+1}}\big(S_n+\E[X_{n+1}|\mathcal{F}_n] -\E(S_n)-\E(X_{n+1})\big).
\end{array}
\end{equation}

By noticing that
\begin{equation}
\begin{array}{lll}
\E[X_{n+1}|\mathcal{F}_n] & = & \displaystyle\sum_{y \in \{0,1\}}\P(X_{n+1}=1|\mathcal{F}_n,Y_n=y)\P(Y_n=y) \\
                                         & = &   (1-\lambda_n)\alpha_n + \lambda_n\left(\alpha_n + \dfrac{\beta_n}{n}S_n\right) \\
                                         & = & \alpha_n + \dfrac{\beta_n}{n}\lambda_nS_n,
\end{array}
\end{equation}
and, given the independence of $Y_n$ and $S_n$
\begin{equation}
\begin{array}{ll}
\E(X_{n+1})  &=  \displaystyle\sum_{y \in \{0,1\}}\sum_{s=0}^n\P(X_{n+1}=1|S_n =s,Y_n=y)\P(S_n=s)\P(Y_n=y) \\[0.5cm]
                                          &=    (1-\lambda_n)\alpha_n + \lambda_n\big(\alpha_n \P(S_n=0)  + \cdots + (\alpha_n + \beta_n)\P(S_n=n)\big) \\[0.2cm]
                                          &=  \alpha_n + \dfrac{\beta_n}{n}\lambda_n\displaystyle\sum_{i=0}^n i \P(S_n=i) =  \alpha_n + \dfrac{\beta_n}{n}\lambda_n\E(S_n). 
                                          
\end{array}
\end{equation}

Then $\{M_n\}_{n \in \N}$ is a martingale. Now, observe that for all $n\ge 2$
\begin{equation}
\label{Dn}
D_n = \dfrac{X_n - \E(X_n)}{a_n} - \dfrac{S_{n-1} - \E(S_{n-1})}{n-1}\dfrac{\beta_{n-1}}{a_n}\lambda_{n-1},
\end{equation}
and 
\begin{equation}
\label{bounded}
|D_n| \le \dfrac{2}{a_n}, \ \ \ n\ge 1.
\end{equation}
In other words, $\{D_n, \mathcal{F}_n, n \ge 1\}$ are bounded martingale differences.
\end{proof}

\begin{proof}[Proof of Theorem \ref{teo1}]
First of all, note that
\begin{equation}
\label{lem1}
\dfrac{a_n}{n} = \dfrac{1}{n}\prod_{k=1}^{n-1}\left(\dfrac{k+\beta_k\lambda_k}{k} \right) = \prod_{k=1}^{n-1}\left( \dfrac{k+\beta_k\lambda_k}{k+1} \right),
\end{equation}
since $\beta_k\lambda_k \le 1$, thus $a_n/n$ is non-increasing. Moreover, $\frac{a_n}{n} = \exp(\sum_{k=1}^{n-1}\log (\frac{k+\beta_k\lambda_k}{k+1}) ) \break = \exp (- \sum_{k=1}^{n-1}\frac{1-\beta_k\lambda_k}{k+1} + O(1))$. Then, $\lim_{n\to\infty} a_n/n = 0$ if and only if $\sum_{k=1}^{\infty} \frac{1-\beta_k\lambda_k}{k+1} = \infty$

Initially assume \eqref{cond1}. That implies $\frac{a_n}{n} \to 0$. Given Lemma \ref{lemC}, define the martingale differences $Z_i = \frac{a_i}{i}D_i$. By \eqref{bounded} 
$\sum_{i=1}^{\infty} \E[Z_i^2| \mathcal{F}_{i-1}] \le \sum_{i=1}^{\infty} \frac{4}{i^2} < \infty$ a.s. 
Lemma \ref{lemA} implies that $\sum_{i=1}^{\infty} Z_i$ converges a.s. Now we use Kronecker's lemma to obtain \eqref{LLN}, that is
\begin{equation}
\frac{a_n}{n}M_n = \frac{a_n}{n} \sum_{i=1}^{n} D_i \to 0 \, \text{ a.s. as } n \to \infty.
\end{equation}
On the other hand, suppose $\sum_{k=1}^{\infty} \frac{1-\beta_k\lambda_k}{1+k} < \infty$. Apply 
\eqref{lem1} to say $v=\lim_{n\to\infty} \frac{a_n}{n} \in (0.1]$ and using \eqref{bounded}, 
$\sum_{i=1}^{\infty} \E[D_i^2| \mathcal{F}_{i-1}] \le \sum_{i=1}^{\infty} \frac{4}{a_i^2} < \infty$ a.s. Again, 
from Lemma \ref{lemA} we obtain $M_n = \sum_{i=1}^{\infty} D_i$ converges a.s. to some random variable $M$, with 
$Var(M)=\lim_{n\to\infty} Var(M_n) = \sum_{i=1}^{\infty} \E(D_i^2) > 0$. Hence, $M$ is a non-degenerate random 
variable. Thus, $\frac{a_n}{n} M_n$ also converges to a non-degenerate random variable, and that completes the proof.
\end{proof}

\begin{proof}[Proof of Theorem \ref{teo2}]
This proof will mainly follow the scheme presented in \cite{ZZ}. Proving item $(a)$, the Skorohod embedding theorem allows us to redefine $\{X_n,\mathcal{F}_n\}$ in a new probability space such that there is a Brownian motion $\{W(t)\}$ and a $\mathcal{F}_n -$filtered sequence of random variables $\tau_n \geq 0$ such that $M_n \overset{d}{=} W(T_n)$ (we can assume that $M_n = W(T_n)$ without loss of generality), where $T_n = \sum_{i=1}^{n} \tau_i$. Furthermore 
$$
\E[\tau_j|\mathcal{F}_{j-1}] = \E[D_j^2|\mathcal{F}_{j-1}] \ \ \ \ \mbox{and also} \ \ \ \ \E[\tau_j^p|\mathcal{F}_{j-1}] = \E[D_j^{2p}|\mathcal{F}_{j-1}] \ \ \mbox{a.s.}
$$

Since $n^{-1}(S_n - \E(S_n)) \xrightarrow{a.s.} 0$, we get that 
$$
\E[D_j^2|\mathcal{F}_{j-1}] = \frac{p_j(1-p_j)}{a_j^2} + o_{a.s.}\left(\frac{1}{a_j^2}\right) \ ,
$$
which implies that
$$
\sum_{j=1}^n \E[\tau_j|\mathcal{F}_{j-1}] = B_n^2 + o_{a.s.}(A_n^2) = B_n^2 + o_{a.s.}(B_n^2) \ .
$$
Here we recall that by ``$y_n$ is $o_{a.s.}(x_n)$" we mean $x_n/y_n \xrightarrow{a.s.} 0 $.
By noticing that $\hat{\tau}_j :\overset{d}{=} \tau_j - \E(\tau_j|\mathcal{F}_{j-1})$ is a sequence of martingale differences with respect to $\mathcal{F}_j$, and since $|D_j| \leq 2/a_j$, we conclude that $\E(\hat{\tau}_j|\mathcal{F}_{j-1}) \leq C a_j^{-4}$, for some positive constant $C$. This implies that $A_j^{-4}\E(\hat{\tau}_j^2|\mathcal{F}_{j-1})$ is summable.

Now we combine Theorem $2.18$ from \cite{H&H} with Kronecker's lemma to conclude that $\sum_{j=1}^{n}\hat{\tau}_j = o_{a.s.}(A_n^2) = o_{a.s.}(B_n^2)$, which in turn implies that $T_n=B_n^2 + o_{a.s.}(B_n^2)$. Then we apply Theorem $1.2.1$ of \cite{CR} combined with the fact that $B_n$ diverges to obtain
$$
M_n = W(T_n) = W(T_n^2) + o_{a.s.}(B_n\sqrt{\log \log B_n}) \ .
$$
Finally we divide both sides of the last equality above by $B_n \sqrt{\log \log B_n}$, and let $n$ goes to infinity to conclude the proof.

For item $(b)$, the goal is to prove that for any $\epsilon>0$ the probability $\P\left(\frac{|W(T_n) - W(B_n^2)|}{B_n}>\epsilon\right)$ vanishes as $n$ diverges. For any $\delta \in (0,1)$ we can decompose the above probability to get

$
\P\left(\frac{|W(T_n) - W(B_n^2)|}{B_n}>\epsilon, \frac{|T_n - B_n^2|}{B_n^2} > \delta\right) 
                                                    + \P\left(\frac{|W(T_n) - W(B_n^2)|}{B_n}>\epsilon, \frac{|T_n - B_n^2|}{B_n^2} \leq \delta\right) \ .
$

It is straightforward to see that the first term above vanishes as $n \to \infty.$ The second one can be bounded above by $\P(\limsup_{|s-1|\leq \delta}|W(s) - W(1)|>\epsilon)$, which also goes to zero as $\delta \to 0$ by the L\'evy modulus of continuity for Wiener processes. For a small enough $\delta$, let $n \to \infty$ to conclude the proof. 
\end{proof}

\begin{proof}[Proof of Theorem \ref{teo3}]
The strategy for this proof is to link the model to a generalized P\'olya urn problem, and then use the results stated by Svante Janson in \cite{Janson}.
%For a more general link between BSRD and P\'olya urn models, we refer the reader to \cite{GL}.

The first step consists in relate the distribution of $\{X_n\}_{n \in \mathbb{N}}$ with the distribution of the red balls in a two-color P\'olya urn (namely $\{R_n\}_{n \in \mathbb{N}}$). 
First of all, let us construct the \emph{random replacement matrix} for the generalized P\'olya urn. Here we will follow the notation given in \cite{Janson}. For this, consider the two column replacement vectors $\xi_{1}=(\xi_{11}, \xi_{12})$ (red) and $\xi_{2}=(\xi_{11}, \xi_{12})$ (blue), with $\xi_i \in \{(0,1),(1,0)\}$ (a single ball is replaced at each time), and the random replacement matrix given by $M = (\xi_1;\xi_2)$. Then if we chose replacement vector $\xi_1$ to reinforce the urn, it means that we will replace $\xi_{11}$ red balls and $\xi_{12}$ blue balls. Otherwise  we choose vector $\xi_2$, and then replace $\xi_{21}$ red, and $\xi_{22}$ blue balls.

At each step, a ball is drawn from the urn, its color is observed, and the ball is replaced. The replacement column vector is then chosen according to the color of the withdrawn ball.

Note that $\P(\xi_{ij}=1) = \E(\xi_{ij})$, for all $i,j$. If at time $n$ we get $r$ red balls ($R_n=r$), then the probabilities of replacing a red ball (success) or a blue ball (failure) at time $n+1$ (conditioned on a proportion $r/T_n$ of red balls) are, respectively, given by

%At each step, a ball is drawn from the urn, its color is observed, and the ball is replaced. Player $\mathcal{A}$ tosses a coin, and decides if the replacement vector will be the same as the color of the ball, or the opposite. Player $\mathcal{B}$ behaves simpler: He (She) only tosses a coin in order to chose the replacement vector. At each step, player $\mathcal{A}$ is chosen with probability $\lambda$. Note that $\P(\xi_{ij}=1) = \E(\xi_{ij})$, for all $i,j$. For this, notice that if at time $n$ we get $r$ red balls ($R_n=r$), then the probabilities of replacing a red ball (success) or a blue ball (failure) at time $n+1$ are, respectively, given by
\begin{equation}\label{red}
\begin{array}{ll}
\P(R_{n+1}=r+1 | R_n=r) &= \P(\xi_{11}=1)\dfrac{r}{T_n} + \P(\xi_{21}=1)\left(1-\dfrac{r}{T_n}\right) \\
&= \E[\xi_{21}] + \left(\E[\xi_{11}]-\E[\xi_{21}]\right)\dfrac{r}{T_n},
\end{array}
\end{equation}
and
\begin{equation}\label{blue}
\P(R_{n+1}=r| R_n=r) = \E[\xi_{22}] + \left(\E[\xi_{12}]-\E[\xi_{22}]\right)\frac{r}{T_n} \ ,
\end{equation}
where $T_n=R_n+B_n$ is the total number of balls at time $n$ ($B_n$ being the number of blue balls).
Now we obtain the transition probabilities for $\{X_n\}$. By the independence of $Y_n$ and $S_n$, we can repeat the argument used in the proof of Lemma \ref{lemC} (sum in all sets $\{Y_n=y\}$) to obtain from \eqref{epi} the following probabilities (conditioned on a proportion $s/n$ of previous successes)
\begin{equation}\label{suc}
\P(X_{n+1}=1 | S_n=s) = \alpha_0-\alpha_0\beta + \lambda\beta\frac{s}{n}
\end{equation} 
\begin{equation}\label{fai}
\P(X_{n+1}=0 | S_n=s) = 1- \alpha_0+\alpha_0\beta - \lambda\beta\frac{s}{n}
\end{equation}

The next step is to construct a matrix $A$ given by
$$
A= \E(M) = \left(
\begin{array}{cc}
\E(\xi_{11}) & \E(\xi_{21}) \\
\E(\xi_{12}) & \E(\xi_{22})
\end{array}
\right) 
$$
Then we relate \eqref{red} with \eqref{suc} and \eqref{blue} with \eqref{fai} to obtain the expected replacement matrix
$$
A=  \left(
\begin{array}{cc}
\alpha_0+\beta(\lambda-\alpha_0) & \alpha_0-\beta\alpha_0 \\
 1-\alpha_0-\beta(\lambda-\alpha_0) & 1- \alpha_0+\beta\alpha_0
\end{array}
\right)
$$
%Now we obtain the key quantities to the limiting theorems stated in \cite{Janson}. As is known by that paper, the limiting theorems depend on the eigendecomposition of $A$. The two eigenvalues of $A$ are $\ell_1 = 1$ and $\ell_2 = \beta \lambda$, and $v_1 = \frac{1}{1-\beta \lambda}{{\alpha_0-\beta \alpha_0}\choose{1-\alpha_0+\beta_0}}$ is the eigenvector associated to $\ell_1$. 
Now we obtain the key quantities to the limiting theorems stated in \cite{Janson}. As is known by that paper, the limiting theorems depend on the eigendecomposition of $A$. The two eigenvalues of $A$ are $\ell_1 = 1$ and $\ell_2 = \beta \lambda$, and $v_1 = \frac{1}{1-\beta \lambda}{{\alpha_0-\beta \alpha_0}\choose{1-\alpha_0 - \beta(\lambda - \alpha_0)}}$ is the eigenvector associated to $\ell_1$. 

Now all the calculations are done by using the results stated in \cite{Janson} (and examples therein). For item $(i)$ we apply Theorem $3.22$ of \cite{Janson} combined with Lemmas $5.4$ and $5.3(i)$ of the same paper to obtain \eqref{tcl1} and \eqref{var1}. For item $(ii)$ we apply Theorem $3.23$ of the same paper to obtain \eqref{tcl2} and \eqref{var2}.

These last two arguments conclude the proof.

\end{proof}
\bigskip

\subsection*{Acknowledgements}
The authors thank G. Ludwig, T. Vargas, M. Abadi and G. Ost for several comments. MGN was supported by Funda\c{c}\~ao de Amparo \`a Pesquisa do Estado de S\~ao Paulo, FAPESP (grant 2015/02801-6). He thanks kind hospitality and financial support from FAMAT-UFU. RL is partially supported by FAPESP (grant 2014/19805-1), and CNPQ PDJ grant (process 406324/2017-4). He thanks Center for Neuromathematics (IME-USP) for warm hospitality.

\bigskip

\end{document}